\newcommand{\keywords}[1]{\par\addvspace\baselineskip
\noindent\keywordname\enspace\ignorespaces#1}
\newcommand{\R}{\mathbb{R}}
\newcommand{\Il}{I^{\alpha}_{a+}}
\newcommand{\Ir}{I^{\alpha}_{b-}}
\newcommand{\Ipl}{I^{\alpha}_{0+}}
\newcommand{\Ipr}{I^{\alpha}_{T-}}
\newcommand{\DRl}{D^{\alpha}_{a+}}
\newcommand{\DRr}{D^{\alpha}_{b-}}
\newcommand{\Dpl}{D^{\alpha}_{0+}}
\newcommand{\Dpr}{D^{\alpha}_{T-}}
\newcommand{\Ila}{I^{1-\alpha}_{a+}}
\newcommand{\Ira}{I^{1-\alpha}_{b-}}
\newcommand{\Ipla}{I^{1-\alpha}_{0+}}
\newcommand{\al}{\alpha}
\begin{document}

\mainmatter
\title{Non-invasive control of the fractional Hegselmann--Krause type model.}

\titlerunning{Non-invasive control of the fractional Hegselmann--Krause type model.}

\author{Ricardo Almeida$^{1}$, Agnieszka B. Malinowska$^{2}$
\and Tatiana Odzijewicz$^{3}$
}
\authorrunning{R. Almeida, A. B. Malinowska, T. Odzijewicz}
% (feature abused for this document to repeat the title also on left hand pages)

% the affiliations are given next; don't give your e-mail address
% unless you accept that it will be published
\institute{$^{1}$Center for Research and Development in Mathematics and Applications (CIDMA)\\ Department of Mathematics,
University of Aveiro, Portugal\\
$^{2}$Faculty of Computer Science,
Bialystok University of Technology\\
15-351 Bia\l ystok, Poland\\
$^{3}$Department of Mathematics and Mathematical Economics\\
Warsaw School of Economics\\
02-554 Warsaw, Poland\\
}

\maketitle

\begin{abstract}
In this paper, the fractional order Hegselmann--Krause type model with leadership is studied. We seek an optimal control strategy for the system to reach a consensus in such a way that the control mechanism is included in the leader dynamics. Necessary optimality conditions are obtained by the use of a fractional counterpart of Pontryagin Maximum Principle. The effectiveness of the proposed control strategy is illustrated by numerical examples.
\keywords{Hegselmann--Krause model, consensus, fractional derivatives, optimal control}
\end{abstract}

\section{Introduction}

In recent years consensus algorithms for multi-agent systems have been widely discussed in the literature, due to their potential
applications in biology \cite{Aoki,reynolds}, physics \cite{Toner,Vicsec} and engineering areas \cite{Jadbabaie,Kar,MPA2017,Olfati1}. The main idea of a consensus algorithm is to drive a team of agents to reach an agreement on a common goal (e.g. positions, velocity, opinion) by interacting with their neighbours. Consensus algorithms are based on nearest-neighbour rules \cite{Cucker_Smale_1,GMSZ2016,Jadbabaie,Olfati1}, bounded confidence \cite{Blondel2010,GMMM,Krause+Hegselmann2002,Mozyrska} or a virtual leader \cite{bai,ren,YJH2015}. The virtual leader is an agent whose motion is independent of all the other agents, and thus is followed by all the other ones. \\
If a consensus is not achieved, one can apply an optimal control strategy to the system to enforce convergence. For example, in \cite{Caponigro} a mathematical model of sparse control was designed in order to attain a consensus. A similar control strategy was proposed in \cite{MOS2017} to address a consensus problem in the fractional Cucker--Smale model.
In this paper, inspired by \cite{Wongkaew}, we propose a different approach. Namely, we introduce the virtual leader to the system and apply a control function to the leader. This control should steer asymptotically the system to consensus in the most economical way. Therefore, in the cost functional we minimize the transient state deviation and a control effort. Following \cite{Wongkaew}, we call this control strategy non-invasive.

The rest of the paper is organized as follows. In Section~\ref{prel}, we recall necessary concepts and facts on
fractional operators and fractional optimal control problems. For a deeper discussion of the fractional calculus and its applications we
refer the reader to \cite{almeida,Kaczorek,Kamocki1,Kamocki2,Kilbas,malin1,malin2,ostalczyk} and references therein. Main results are then
stated and proved in Section~\ref{sec:ocHK}, where we show the existence of optimal controls for the fractional Hegselmann--Krause type model with leadership and the necessary optimality conditions. In Section~\ref{examples}, simulation results are presented in order to demonstrate the validity of the proposed control strategy.

%------------------------------

\section{Preliminaries}\label{prel}

In this section, we give notations and essential facts that will be used in the sequel.

Let $[a,b]\subset \R$ be any bounded interval. For $\al>0$ and $f\in L^1([a,b];\R^n)$ we define the left and the right Riemann--Liouville fractional integrals $\Il$ and $\Ir$ by
\begin{equation*}
\Il[f](t):=\frac{1}{\Gamma(\al)}\int\limits_a^t\frac{f(\tau)}{(t-\tau)^{1-\al}}\;d\tau, ~~t\in[a,b]~a.e.
\end{equation*}
\begin{equation*}
\Ir[f](t):=\frac{1}{\Gamma(\al)}\int\limits_t^b\frac{f(\tau)}{(\tau-t)^{1-\al}}\;d\tau, ~~t\in[a,b]~a.e.
\end{equation*}
Now, let us define
$$\Il(L^p([a,b];\R^n)):=\left\{f:[a,b]\rightarrow\R^n:\exists_{g\in L^p([a,b];\R^n)}f=\Il[g]\right\}$$
and
$$\Ir(L^p([a,b];\R^n)):=\left\{f:[a,b]\rightarrow\R^n:\exists_{g\in L^p([a,b];\R^n)}f=\Ir[g]\right\}.$$
For $\alpha \in (0,1)$ the left Riemann--Liouville fractional derivatives $\DRl$ are defined for functions $\Il f\in AC([a,b];\R^n)$ by
$$\DRl[f](t):=\frac{d}{dt}\Il[f](t),~~t\in[a,b]~a.e.$$
Similarly, for $\alpha \in (0,1)$ the right Riemann--Liouville fractional derivatives $\DRr$ are defined for functions $\Ir f\in AC([a,b];\R^n)$ by
$$\DRr[f](t):=\frac{d}{dt}\Ir[f](t),~~t\in[a,b]~a.e.$$
Consider the following fractional optimal control problem:

\begin{eqnarray}
\DRl[y](t)=g(t,y(t),u(t)),~~t\in [a,b]\textnormal{ a.e.},\label{eqn:FOC1:1}\\
\Ila [y](a)=y_0,\label{eqn:FOC1:2}\\
u(t)\in M\subset \R^m,~~t\in [a,b],\\%\label{eqn:FOC1:3}\\
\mathcal{J}(y,u)=\int\limits_a^b f(t,y(t),u(t))\;dt\rightarrow\min,\label{eqn:FOC1:4}
\end{eqnarray}
where $f:[a,b]\times\R^n\times M\rightarrow\R$, $g:[a,b]\times\R^n\times M\rightarrow\R^n$ and $\alpha\in (0,1)$.

\begin{definition}
Suppose that
$$\mathcal{U}_M:=\left\{u\in L^1([a,b];\R^m):u(t)\in M,t\in[a,b]\right\}.$$
A pair $(y_*,u_*)\in \Il(L^p)\times \mathcal{U}_M$ is said to be locally optimal solution
to problem \eqref{eqn:FOC1:1}--\eqref{eqn:FOC1:4}, if $y_*$, corresponding to $u_*$,
solves \eqref{eqn:FOC1:1}--\eqref{eqn:FOC1:2} and there is a neighborhood $V$ of $y_*$ in $\Il(L^p)$
such that
$$\mathcal{J}(y_*,u_*)\leq\mathcal{J}(y,u)$$
for every pair $(y,u)\in V\times\mathcal{U}_M$ satisfying \eqref{eqn:FOC1:1}--\eqref{eqn:FOC1:2}.
\end{definition}

Let $\left\|\cdot\right\|$ denote the Euclidean norm. The following theorem is a fractional counterpart of Pontryagin Maximum Principle.

\begin{theorem}{[cf. Theorem~8 and Theorem~9, \cite{Kamocki2}]}\label{thm:Kam3}
Let $\al\in (0,1)$ and $1\leq p<\frac{1}{1-\al}$. We assume that $M$ is compact and the following assumptions are satisfied:
\begin{enumerate}
\item $g\in C^1$ with respect to $y\in\R^n$ and
\begin{enumerate}
\item $t \mapsto g(t,y,u)$ is measurable on $[a,b]$ for all $y\in\R^n$, $u\in M$, $u\mapsto g(t,y,u)$ is continuous on $M$ for $t\in [a,b]$ a.e. and all $y\in \R^n$;
\item there exists $L>0$ such that
$$\left\|g(t,y_1,u)-g(t,y_2,u)\right\|\leq L\left\|y_1-y_2\right\|$$
for $t\in [a,b]$ a.e. and all $y_1,y_2\in\R^n$, $u\in M$;
\item there exist $r\in L^p([a,b];\R)$ and $\gamma\geq 0$ such that
$$\left\|g(t,0,u)\right\|\leq r(t)+\gamma \left\|u\right\|$$
for $t\in [a,b]$ a.e. and all $u\in M$;
\end{enumerate}
\item $t\mapsto f(t,y,u)$ is measurable on $[a,b]$ for all $y\in\R^n$, $u\in M$ and $u\mapsto f(t,y,u)$ is continuous on $M$
for a.e. $t\in[a,b]$ and all $y\in\R^n$;
\item $f\in C^1$ with respect to $y\in\R^n$ and there exist $\bar{a}_1\in L^{1}([a,b],\R_0^{+})$,
$\bar{a}_2\in L^{p'}([a,b],\R_0^{+})$ $\left(\frac{1}{p}+\frac{1}{p'}=1\right)$, $\bar{C}_1,\bar{C}_2\geq 0$ such that
\begin{eqnarray}
\left\|f(t,y,u)\right\|\leq \bar{a}_1(t)+\bar{C}_1\left\|y\right\|^p,\\%\label{eq:PMP:1}\\
\left\|\frac{\partial}{\partial y}f(t,y,u)\right\|\leq \bar{a}_2(t)+\bar{C}_2\left\|y\right\|^{p-1},%\label{eq:PMP:2}
\end{eqnarray}
for a.e. $t\in[a,b]$ and all $y\in\R^n$, $u\in M$;
\item $t\mapsto \displaystyle\frac{\partial}{\partial y}g(t,y,u)$, $t\mapsto \displaystyle\frac{\partial}{\partial y}f(t,y,u)$ are measurable on $[a,b]$ for all
$y\in\R^n$, $u\in M$;
\item $u\mapsto \displaystyle\frac{\partial}{\partial y}g(t,y,u)$, $u\mapsto \displaystyle\frac{\partial}{\partial y}f(t,y,u)$ are continuous on $M$ for a.e. $t\in[a,b]$ and all $y\in\R^n$;
\item for a.e. $t\in [a,b]$ and all $y\in\R^n$ the set
\begin{equation}\label{set}
\tilde{Z}:=\left\{\left(f(t,y,u),g(t,y,u)\right)\in\R^{n+1},~u\in M\right\}
\end{equation}
is convex.
\end{enumerate}
If the pair
$$(y_*,u_*)\in \left(\Il(L^p)+\left\{\frac{d}{(t-a)^{1-\al}};d\in\R^n\right\}\right)\times \mathcal{U}_M$$
is a locally optimal solution to problem \eqref{eqn:FOC1:1}--\eqref{eqn:FOC1:4}, then there exists a function $\lambda\in \Ir (L^{p'})$,
such that
\begin{equation}%\label{PMP:C1}
\DRr[\lambda](t)=\frac{\partial}{\partial y}g(t,y_*(t),u_*(t))^T\lambda(t)+\frac{\partial}{\partial y}f(t,y_*(t),u_*(t))
\end{equation}
for a.e. $t\in [a,b]$ and
\begin{equation}%\label{PMP:C2}
\Ira[\lambda](b)=0.
\end{equation}
Moreover,
\begin{equation}%\label{PMP:C3}
f(t,y_*(t),u_*(t))+\lambda(t)g(t,y_*(t),u_*(t))=\min\limits_{u\in M}\left\{f(t,y_*(t),u)+\lambda(t)g(t,y_*(t),u)\right\}
\end{equation}
for a.e. $t\in [a,b]$.
\end{theorem}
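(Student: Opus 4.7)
The plan is to adapt the classical needle-variation proof of Pontryagin's Maximum Principle to the Riemann--Liouville fractional setting. First I would verify that for every admissible $u\in\mathcal{U}_M$ the state equation \eqref{eqn:FOC1:1}--\eqref{eqn:FOC1:2} has a unique solution in $\Il(L^p)+\{d(t-a)^{\al-1}:d\in\R^n\}$ via a Banach fixed-point argument on the integral reformulation $y(t)=\frac{y_0(t-a)^{\al-1}}{\Gamma(\al)}+\Il[g(\cdot,y(\cdot),u(\cdot))](t)$, using the Lipschitz condition 1(b) and the growth condition 1(c). The restriction $p<1/(1-\al)$ puts the singular kernel $(t-a)^{\al-1}$ in the correct Lebesgue class so that the iterates stay in the ambient space.

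Next I would introduce spike variations of $u_*$. Fix $v\in M$ and a Lebesgue point $\tau\in(a,b)$ of $t\mapsto f(t,y_*(t),u_*(t))$ and $t\mapsto g(t,y_*(t),u_*(t))$; set $u_\eps(t):=v$ on $[\tau-\eps,\tau]$ and $u_\eps:=u_*$ elsewhere, and let $y_\eps$ be the corresponding state. The key technical step is the first-order expansion $y_\eps=y_*+\eps\,z+r_\eps$ with $\|r_\eps\|_{\Il(L^p)}=o(\eps)$, where $z\equiv 0$ on $[a,\tau]$ and, on $(\tau,b]$,
$$z(t)=\frac{(t-\tau)^{\al-1}}{\Gamma(\al)}\bigl[g(\tau,y_*(\tau),v)-g(\tau,y_*(\tau),u_*(\tau))\bigr]+\Il\Bigl[\tfrac{\partial g}{\partial y}(\cdot,y_*,u_*)\,z\Bigr](t).$$
This asymptotic expansion is the main obstacle: the weakly singular kernel of $\Il$, combined with only $L^p$ integrability in time, forces one to work with Gronwall-type estimates specifically tailored to fractional integrals rather than the usual uniform ones, and to track the singular contribution carried by the impulsive term at $\tau$.

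Then I would construct the adjoint $\lambda\in\Ir(L^{p'})$ as the unique solution of the proposed adjoint equation with the transversality $\Ira[\lambda](b)=0$; existence follows from assumption~3 together with the boundedness of $\partial g/\partial y$ implied by 1(b). The link between $z$ and $\lambda$ is provided by the fractional integration-by-parts identity
$$\int_a^b\lambda(t)^T\DRl[z](t)\,dt=\int_a^b\DRr[\lambda](t)^T z(t)\,dt,$$
valid for $z\in\Il(L^p)$, $\lambda\in\Ir(L^{p'})$ under the stated boundary conditions. Plugging this into the first-order Taylor expansion of $\mathcal{J}(y_\eps,u_\eps)-\mathcal{J}(y_*,u_*)$ yields, after routine manipulation, an expression of the form $\eps\bigl[H(\tau,y_*(\tau),v,\lambda(\tau))-H(\tau,y_*(\tau),u_*(\tau),\lambda(\tau))\bigr]+o(\eps)$ with $H:=f+\lambda^T g$. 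Local optimality forces the bracket to be nonnegative for every $v\in M$ and a.e.\ $\tau$, and compactness of $M$ converts this into the pointwise minimisation condition stated in the theorem. The convexity of $\tilde Z$ in assumption~7 is not invoked for these necessary conditions; it enters only in the companion existence half of \cite{Kamocki2}.
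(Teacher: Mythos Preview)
The paper does not itself prove Theorem~\ref{thm:Kam3}; it is quoted from \cite{Kamocki2}. The paper does, however, describe the method used there (see the opening of Section~\ref{sec:ocHK}.2): Kamocki obtains the fractional Maximum Principle as a consequence of the Smooth-Convex Extremum Principle (SCEP), an abstract Lagrange-multiplier/separation argument applied to the problem recast in function space. Your route via spike (needle) variations and a direct first-order expansion of the perturbed trajectory is a genuinely different strategy. The SCEP approach sidesteps the delicate asymptotics of $y_\eps-y_*$ in the presence of the weakly singular kernel, at the price of a structural convexity hypothesis; your approach attacks that asymptotic analysis directly and, if carried through, would in principle deliver the minimum condition without any convexity of $\tilde Z$.

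One correction: your last sentence misreads the role of assumption~6 (which you label~7). The convexity of $\tilde Z$ is \emph{not} an ingredient of the existence result (Theorem~\ref{thm:Kam2} uses convexity of $M$ and of $u\mapsto f$, not of $\tilde Z$). As the paper states explicitly, convexity of $\tilde Z$ is exactly what makes the SCEP applicable, i.e.\ it is essential to Kamocki's proof of the \emph{necessary} conditions. So in the cited proof that assumption cannot be dropped; in your proposed needle-variation argument it would indeed be superfluous, but for a different reason than the one you give.
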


Let $g(t,y,u)=A(t)y(t)+B(t)u(t)$, where $A:[a,b]\rightarrow\R^{n\times n}$, $B:[a,b]\rightarrow\R^{n\times m}$. Then the following theorem, proved in \cite{Kamocki1}, ensures the existence of an optimal solution to problem \eqref{eqn:FOC1:1}--\eqref{eqn:FOC1:4}.

\begin{theorem}{[cf. Theorem 19, \cite{Kamocki1}]}\label{thm:Kam2}
Suppose that $1<p<\frac{1}{1-\al}$ and
\begin{enumerate}
\item $M$ is convex and compact;
\item $t\mapsto f(t,y,u)$ is measurable on $[a,b]$ for all $y\in\R^n$ and $u\in M$;
\item $(y,u)\mapsto f(t,y,u)$ is continuous on $\R^n\times M$ for a.e. $t\in [a,b]$;
\item $u\mapsto f(t,y,u)$ is convex on $M$ for a.e. $t\in [a,b]$ and all $y\in\R^n$;
\item $A,B$ are essentially bounded on $[a,b]$;
\item there exists a summable function $\psi_1:[a,b]\rightarrow\R_0^+$ and a constant $c_1\geq 0$ such that
\begin{equation*}
f(t,y,u)\geq -\psi_1(t)-c_1\left\|y\right\|
\end{equation*}
for a.e. $t\in[a,b]$ and all $y\in\R^n$, $u\in M$.
\end{enumerate}
Then problem \eqref{eqn:FOC1:1}--\eqref{eqn:FOC1:4} possesses an optimal solution
$$(y_0,u_0)\in\left(\Il(L^p)+\left\{\frac{d}{(t-a)^{1-\al}};d\in\R^n\right\}\right)\times \mathcal{U}_M.$$
\end{theorem}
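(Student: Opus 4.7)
The natural strategy is the \emph{direct method} of the calculus of variations, adapted to the fractional setting. First I would check that the admissible set is nonempty and the cost bounded below: because $M$ is compact, any measurable selection $u \in \mathcal{U}_M$ is essentially bounded, and then the linearity of $g$ in $(y,u)$ together with the essential boundedness of $A,B$ yields a unique state $y \in \Il(L^p) + \{d/(t-a)^{1-\al}\}$ (with $d=y_0/\Gamma(\al)$ fixed by the initial condition) through the integral reformulation
\begin{equation*}
y(t) = \frac{y_0}{\Gamma(\al)}(t-a)^{\al-1} + \Il\bigl[A\,y + B\,u\bigr](t);
\end{equation*}
a fractional Gronwall estimate gives an a priori $L^p$-bound on $y$ in terms of $\|u\|_\infty$. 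Combined with the growth condition (6), this makes $\mathcal{J}$ bounded below on the admissible set, so the infimum $J^*$ is finite and a minimizing sequence $(y_k,u_k)$ exists.

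Next I would extract a limit. Since $M$ is convex and compact, $\mathcal{U}_M$ is convex, bounded and strongly closed in $L^p([a,b];\R^m)$, hence weakly compact; pass to a subsequence with $u_k \rightharpoonup u_*$ in $L^p$, with $u_* \in \mathcal{U}_M$ by Mazur's lemma. For the states, the key input is that, under the hypothesis $p < 1/(1-\al)$, the operator $\Il$ is compact from $L^p([a,b];\R^n)$ into $C([a,b];\R^n)$ (this follows from H\"older-type estimates on fractional integrals and Arzel\`a--Ascoli). Applying this to $B\,u_k$ and iterating against the linear term $A\,y_k$ (or using the fractional resolvent of $A$), I would conclude that the regular part of $y_k$ converges strongly in $C([a,b];\R^n)$ to some $y_*$; the singular part is identical for all $k$ since $d$ is pinned down by $y_0$. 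Passing to the limit in the integral equation then shows $y_*$ is the state associated with $u_*$, so $(y_*,u_*)$ is admissible.

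It remains to show $\mathcal{J}(y_*,u_*) \le \liminf_k \mathcal{J}(y_k,u_k) = J^*$. This is the classical Ioffe--Olech type lower semicontinuity of integral functionals: $f$ is measurable in $t$, continuous in $(y,u)$, convex in $u$, and bounded below by the summable function $-\psi_1(t) - c_1\|y\|$; combined with the strong convergence $y_k \to y_*$ in $C$ and the weak convergence $u_k \rightharpoonup u_*$ in $L^p$, this yields the desired inequality, whence $(y_*,u_*)$ is optimal.

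The principal obstacle is the state-convergence step, because the representation of solutions contains the singular term $(t-a)^{\al-1}/\Gamma(\al)$ which lies in $L^p$ only thanks to the threshold $p<1/(1-\al)$, and the compactness of $\Il$ on $L^p$ depends on exactly this same threshold. Matching these two requirements, and then closing a Gronwall-type argument for the linear fractional equation to upgrade weak to strong convergence of $y_k$, is the most delicate part; once it is in place, the weak lower semicontinuity argument proceeds in the classical manner.
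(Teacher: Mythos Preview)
The paper does not supply a proof of this statement: it is quoted as a preliminary result from \cite{Kamocki1} (``cf.\ Theorem~19''), so there is no in-paper argument to compare against. Your outline via the direct method---minimizing sequence, weak compactness of $\mathcal{U}_M$, compactness-induced strong convergence of the states, and Ioffe-type weak lower semicontinuity of the integral functional---is the correct strategy and is essentially the route taken in Kamocki's original paper.

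There is, however, one concrete technical slip. You assert that the hypothesis $p<1/(1-\alpha)$ makes $\Il$ compact from $L^p$ into $C([a,b];\R^n)$. That implication is false in general: boundedness (let alone compactness) of $\Il$ into $C$ requires $\alpha>1/p$, i.e.\ $p>1/\alpha$, whereas $p<1/(1-\alpha)$ only gives $\alpha>1-1/p$. For $p<2$ the latter is strictly weaker; e.g.\ $\alpha=0.4$, $p=1.5$ satisfies the hypothesis but $\Il[L^p]\not\subset C$. The role of the threshold $p<1/(1-\alpha)$ is rather to ensure that the singular term $(t-a)^{\alpha-1}$ lies in $L^p$, so that the trajectory space makes sense.

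This is repairable without changing your overall scheme. One option is to target strong convergence of $y_k$ in $L^p$ instead of $C$: $\Il$ is compact on $L^p$ for any $\alpha\in(0,1)$ (weakly singular Volterra kernel), and the lower-semicontinuity step only needs strong $L^p$ (or a.e.) convergence of states together with weak convergence of controls. Alternatively, since $M$ is compact the controls are uniformly bounded, so $B u_k\in L^q$ for every $q$; choosing $q>1/\alpha$ gives compactness of $\Il$ into $C$ on that term, and the remaining $A y_k$ part can be handled via the fractional resolvent (Mittag--Leffler) representation you allude to, which separates the singular initial-data contribution from the control-driven part. Either route closes the gap and the rest of your argument stands.
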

%------------------------------------------------------------------------------
\section{Optimal control of the fractional Hegselmann--Krause type model with leadership}\label{sec:ocHK}

In this section we investigate the fractional optimal control problem with the Hegselmann--Krause type dynamics and leadership. The agents' and the leader's opinions are denoted by $x_i$, $i=1,\ldots,N$, $N>1$, and $x_0$, respectively, and are represented by the state of the system ${\bf x}=(x_0,x_1,\dots,x_N)\in \R^{(N+1)d}$.

Let $\alpha\in (0,1)$ and $x:[0,T]\rightarrow \R^{(N+1)d}$. The control is an integrable function $t\mapsto u(t)\in\R^d$ such that
$$u(t)\in M:=\left\{u(t)\in\R^d:\left\|u\right\|_{l_2^d}\leq K\right\},$$
 where $\left\|\cdot\right\|_{l_2^d}$ denotes the Euclidean norm in $\R^d$. Consider the problem of finding trajectory solution to the system

\begin{equation}\label{eq:HKS1}
\begin{cases}
\Dpl [x_0](t)=u(t),\\
\Dpl [x_i](t)=\sum\limits_{j=1}^N a_{ij}(x_j(t)-x_i(t))+c_i(x_0(t)-x_i(t)),
\end{cases}
\end{equation}
$i=1,\dots,N$, initialized at $\Ipla [x_j](0)=x_{j0}\in\R^d$, $j=0,1,\dots, N$.
In system \eqref{eq:HKS1}, the term $\sum\limits_{j=1}^N a_{ij}(x_j(t)-x_i(t))$ comes from the classical Hegselmann--Krause model.
The weights $a_{ij}\geq0$ quantify the way that the agents influence each other. The second term $c_i(x_0(t)-x_i(t))$ describes the influence of the leader on the $i$th agent at the time $t$. In the case when the leader state is available to agent $i$ the value of $c_i$ is positive, otherwise $c_i=0$. System \eqref{eq:HKS1} can be written in the matrix form as follows:
$$\Dpl [x](t)=Ax(t)+Bu(t),$$
with $x=(x_0,x_1,\ldots,x_N)^T$, $u=(u_0,\ldots,\ldots,0)^T$,
\[A=   \left[\begin {array}{cccc} 0_{d}&0_{d}&\cdots&0_{d}\\ \noalign{\medskip}c_1I_{d}&s_1I_{d}&\cdots&a_{1N}I_{d}
\\ \noalign{\medskip}\cdots&\cdots&\ddots&\cdots\\ \noalign{\medskip}c_NI_{d}&a_{N1}I_{d}&\cdots&s_NI_{d}
\end {array} \right],
\,\quad B=\left[ \begin {array}{c} 1_{d}\\ \noalign{\medskip}0_{d}\\ \noalign{\medskip}\cdots\\ \noalign{\medskip}0_{d}
\end {array} \right]
\,,\]
where $s_i=-\bigl(\sum_{j\neq i}a_{ij}+c_i\bigr)$, $i=1,...,N$, $I_{d}$ is the identity and $0_{d}$ is the null matrix.

A solution to \eqref{eq:HKS1} has to minimize the following cost functional
\begin{equation}\label{eq:HKCF1}
\int\limits_0^T\Biggl[
\frac{1}{2N^2}\sum\limits_{i,j=1}^N\left\|x_i(t)-x_j(t)\right\|_{l_2^d}^2
+\frac{1}{2}\sum\limits_{i=1}^N\left\|x_0(t)-x_i(t)\right\|_{l_2^d}^2
+\frac{\nu}{2}\left\|u\right\|_{l_2^d}^2\Biggr]\;dt,
\end{equation}
where $\nu>0$ is a weight constant.

%-----------------------------------------------------------------------------
\subsection{Existence of solutions}

In this part of the text we discuss the question of existence of solutions to optimal control problem \eqref{eq:HKS1}--\eqref{eq:HKCF1}.

\begin{theorem}\label{thm:Ex:HK}
If $1< p<\frac{1}{1-\al}$, then the Hegselmann--Krause fractional optimal control problem \eqref{eq:HKS1}--\eqref{eq:HKCF1} has an optimal solution
$(x_*,u_*)$ in the set
$$\left(\Ipl\left(L^p([0,T];(\R^d)^{N+1}\right)+\left\{\displaystyle\frac{c}{t^{1-\al}};c\in(\R^d)^{N+1}\right\}\right)\times \mathcal{U}_M,$$
where
$$\mathcal{U}_M=\left\{u\in L^1\left([0,T];\R^d\right): u(t)\in M,~t\in [0,T]\right\}.$$
\end{theorem}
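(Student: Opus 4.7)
My strategy is to apply Theorem~\ref{thm:Kam2} directly, since system \eqref{eq:HKS1} is already presented in the linear form $\Dpl[x](t)=Ax(t)+Bu(t)$ with constant matrices $A$ and $B$, and the cost functional \eqref{eq:HKCF1} has the requisite integral form with $n=(N+1)d$ and $m=d$. The bulk of the work is simply verifying the six hypotheses of that theorem for the specific choices of $A$, $B$, $M$ and $f$.

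First, the admissible control set $M$ is the closed Euclidean ball of radius $K$ in $\R^d$, so it is convex and compact, giving hypothesis~1. The integrand
\[
f(t,x,u)=\frac{1}{2N^2}\sum_{i,j=1}^{N}\|x_i-x_j\|_{l_2^d}^2+\frac{1}{2}\sum_{i=1}^{N}\|x_0-x_i\|_{l_2^d}^2+\frac{\nu}{2}\|u\|_{l_2^d}^2
\]
is independent of $t$ and polynomial in $(x,u)$, hence measurable in $t$ (hypothesis~2) and jointly continuous in $(x,u)$ (hypothesis~3); moreover, writing $f$ as $\Phi(x)+\tfrac{\nu}{2}\|u\|_{l_2^d}^2$ with $\Phi$ independent of $u$, the $u$--dependence is a nonnegative quadratic form, which is convex, giving hypothesis~4. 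Since $A$ and $B$ are constant matrices they are trivially essentially bounded (hypothesis~5). Finally, $f$ is a sum of squares with nonnegative weights, so $f(t,x,u)\ge 0$ and the coercivity bound (hypothesis~6) holds with $\psi_1\equiv 0$ and $c_1=0$.

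The assumption $1<p<\frac{1}{1-\al}$ in Theorem~\ref{thm:Ex:HK} matches the corresponding assumption in Theorem~\ref{thm:Kam2}, so after checking the six items above the conclusion follows immediately: there exists an optimal pair
\[
(x_*,u_*)\in\left(\Ipl(L^p([0,T];(\R^d)^{N+1}))+\bigl\{c\,t^{\al-1}:c\in(\R^d)^{N+1}\bigr\}\right)\times\mathcal{U}_M.
\]
I do not anticipate any real obstacle; the only point that merits a sentence of justification is convexity in $u$, and that is immediate from the separated form of $f$. The argument is therefore essentially a verification, with the hypotheses of the cited existence theorem tailored precisely to linear state equations and quadratic, coercive integrands such as the one at hand.
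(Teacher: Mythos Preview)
Your proposal is correct and follows essentially the same route as the paper: both proofs verify the six hypotheses of Theorem~\ref{thm:Kam2} for the linear system $Ax+Bu$ with constant $A,B$ and the quadratic integrand, with the only minor difference being that you take $\psi_1\equiv 0$, $c_1=0$ from $f\ge 0$, whereas the paper first bounds $f$ below by $\tfrac{\nu}{2}\|u\|_{l_2^d}^2$ before choosing the constants.
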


\begin{proof}
In order to obtain the desired result, it is enough to show that assumptions of Theorem~\ref{thm:Kam2} are satisfied. First, let us note that
set $M$ is compact (as it is closed and bounded) and let us define maps $g:[0,T]\times(\R^d)^{N+1}\times M\rightarrow(\R^d)^{N+1}$,
\begin{equation}\label{eq:FunctG}
g(t,x(t),u(t)):=\left[
\begin{array}{l}
u(t)\\
\sum\limits_{j=1}^N a_{ij}(x_j(t)-x_i(t))+c_i(x_0(t)-x_i(t))
\end{array}\right]_{i=1,\dots,N}
\end{equation}
and $f:[0,T]\times(\R^d)^{N+1}\times M\rightarrow\R$,
\begin{equation}\label{eq:FunctF}
f(t,x(t),u(t)):=\frac{1}{2N^2}\sum\limits_{i,j=1}^N\left\|x_i(t)-x_j(t)\right\|_{l_2^d}^2
+\frac{1}{2}\sum\limits_{i=1}^N\left\|x_0(t)-x_i(t)\right\|_{l_2^d}^2
+\frac{\nu}{2}\left\|u\right\|_{l_2^d}^2.
\end{equation}
It is easy to see that function $f$ is measurable w.r.t. $t$, continuous w.r.t. $(x,u)$ and convex w.r.t. $u$.
Moreover, because $A$ and $B$ are matrices with constant coefficients, they are essentially bounded on $[0,T]$.
Finally, let us note that
\begin{equation*}
f(t,x,u)=\frac{1}{2N^2}\sum\limits_{i,j=1}^N\left\|x_i(t)-x_j(t)\right\|_{l_2^d}^2
+\frac{1}{2}\sum\limits_{i=1}^N\left\|x_0(t)-x_i(t)\right\|_{l_2^d}^2
+\frac{\nu}{2}\left\|u\right\|_{l_2^d}^2\geq\frac{\nu}{2}\left\|u\right\|_{l_2^d}^2,
\end{equation*}
so choosing $\psi_1(t)=-\frac{\nu}{2}\left\|u\right\|_{l_2^d}^2$ and $c_1=0$, we see that the last assumption of Theorem~\ref{thm:Kam2} is satisfied.
\end{proof}

%-----------------------------------------
\subsection{Necessary optimality conditions}

In this section, in order to prove necessary optimality conditions for problem \eqref{eq:HKS1}--\eqref{eq:HKCF1}, we show that assumptions of  Theorem~\ref{thm:Kam3} are fulfilled. Let functions $f$ and $g$ be defined in the same way as in the proof of Theorem~\ref{thm:Ex:HK}. First, let us note that in \cite{Kamocki2} the author showed that under certain assumptions fractional optimal control problem \eqref{eqn:FOC1:1}--\eqref{eqn:FOC1:4} satisfies the Smooth Convex Extremum Principle (SCEP) (see, e.g., Theorem~3 of \cite{Kamocki2}) and as a consequence he obtained the Fractional Pontryagin Maximum Principle (Theorem~\ref{thm:Kam3}, see Theorems~8 and 9 of \cite{Kamocki2}).
Among other things, it was shown that if for a.e. $t\in [a,b]$ and all $y\in\R^n$ the set $\tilde{Z}$ (defined by \eqref{set})
is convex, then the convexity assumption in SCEP is satisfied. In the case of problem \eqref{eq:HKS1}--\eqref{eq:HKCF1} we have that
for all $u_1,u_2\in \mathcal{U}_M$, $x\in\Ipl\left(L^2([0,T];(\R^d)^{N+1})\right)+\left\{\displaystyle\frac{c}{t^{1-\al}};c\in(\R^d)^{N+1}\right\}$ and $\theta\in [0,1]$ the following hold:
$$\theta f(t,x,u_1)+(1-\theta)f(t,x,u_2)\geq f(t,x,\theta u_1+(1-\theta)u_2),$$
$$\theta g(t,x,u_1)+(1-\theta)g(t,x,u_2)= g(t,x,\theta u_1+(1-\theta)u_2)$$
with  $\theta u_1+(1-\theta)u_2\in \mathcal{U}_M$, by convexity of $M$. Therefore, the convexity assumption in SCEP is fulfilled.
One can easily check that $f$, $g$ are continuously differentiable with respect to $x\in (\R^d)^{N+1}$ for a.a. $t\in [0,T]$ and all $u\in\R^d$ and that $t\mapsto g(t,x(t),u(t))$, $t\mapsto f(t,x(t),u(t))$ are measurable on [0,T], also $u\mapsto f(t,x(t),u(t))$ is continuous on $M$. Moreover, $x\mapsto g(t,x,u)$ is Lipschitz, $u\mapsto g(t,x,u)$ is continuous and
$$\left\|g(t,0,u)\right\|_{l_1^{N+1}-l_2^d}=\left\|u\right\|_{l_1^{N+1}-l_2^d}.$$
Choosing $r(t)=0$ and $\gamma=1$  we conclude that assumption 1 of Theorem~\ref{thm:Kam3} is satisfied. Note that
\begin{equation}\label{eq:PochF}
\frac{\partial f}{\partial x}=\left[
\begin{array}{c}
Nx_0-\sum\limits_{i=1}^Nx_i\\
\frac{2}{N}(x_1-\frac{1}{N}\sum\limits_{j=1}^Nx_j)+(x_1-x_0)\\
\vdots\\
\frac{2}{N}(x_N-\frac{1}{N}\sum\limits_{j=1}^Nx_j)+(x_N-x_0)
\end{array}\right]
\end{equation}
and system \eqref{eq:HKS1} is linear. Therefore, $t\mapsto \frac{\partial f}{\partial x}(t,x,u)$,  $t\mapsto \frac{\partial g}{\partial x}(t,x,u)$ are measurable
on $[0,T]$ and $u\mapsto \frac{\partial f}{\partial x}(t,x,u)$,  $u\mapsto \frac{\partial g}{\partial x}(t,x,u)$
are continuous on $M$. Finally, let us check assumption 3 of Theorem~\ref{thm:Kam3}.
Observe that
\begin{multline*}
|f(t,x,u)|\leq
\frac{1}{2N^2}\sum\limits_{i,j=1}^N\left(\left\|x_i\right\|_{l_2^d}^2+2\left\|x_i\right\|_{l_2^d}\left\|x_j\right\|_{l_2^d}
+\left\|x_j\right\|_{l_2^d}^2\right)\\
+\frac{1}{2}\sum\limits_{i=1}^N\left(\left\|x_0\right\|_{l_2^d}^2+2\left\|x_0\right\|_{l_2^d}\left\|x_i\right\|_{l_2^d}
+\left\|x_i\right\|_{l_2^d}^2\right)+\frac{\nu}{2}\left\|u\right\|_{l_2^d}^2\\
=\left(\frac{1}{N}+\frac{1}{2}\right)\sum\limits_{i=1}^N\left\|x_i\right\|_{l_2^d}^2
+\left(\frac{1}{N^2}\sum\limits_{i=1}^N\left\|x_i\right\|_{l_2^d}+\left\|x_0\right\|_{l_2^d}\right)\sum\limits_{i=1}^N\left\|x_i\right\|_{l_2^d}
\\+\frac{N}{2}\left\|x_0\right\|_{l_2^d}^2
+\frac{\nu}{2}\left\|u\right\|_{l_2^d}^2
\leq\sum\limits_{i=0}^N\left\|x_i\right\|_{l_2^d}^2+\left(\sum\limits_{i=0}^N\left\|x_i\right\|_{l_2^d}\right)\left(\sum\limits_{i=1}^N\left\|x_i\right\|_{l_2^d}\right)
+\frac{\nu}{2}\left\|u\right\|_{l_2^d}^2\\
\leq\sum\limits_{i=1}^N\left\|x_i\right\|_{l_2^d}^2+\left(\sum\limits_{i=0}^N\left\|x_i\right\|_{l_2^d}\right)^2+\frac{\nu}{2}\left\|u\right\|_{l_2^d}^2\leq \frac{\nu}{2}\left\|u\right\|_{l_2^d}^2+2\left\|x\right\|_{l_1^{N+1}-l_2^d}^2,
\end{multline*}
and
\begin{multline*}
\left\|\frac{\partial f}{\partial x}(t,x,u)\right\|_{l_1^{N+1}-l_2^d}=\|Nx_0-\sum\limits_{i=1}^Nx_i\|_{l_2^d}
+\sum\limits_{i=1}^N\|\frac{2}{N}(x_i-\frac{1}{N}\sum\limits_{j=1}^Nx_j)+(x_i-x_0)\|_{l_2^d}\\
\leq N\|x_0\|_{l_2^d}+\sum\limits_{i=1}^N\|x_i\|_{l_2^d}+\left(\frac{2}{N}+1\right)\sum\limits_{i=1}^N \|x_i\|_{l_2^d}
+\frac{2}{N^2}\sum\limits_{i=1}^N\|\sum\limits_{j=1}^N x_j\|_{l_2^d}
+\sum\limits_{i=1}^N \|x_0\|_{l_2^d}\\
\leq (2+\frac{4}{N})\sum\limits_{i=0}^N\|x_i\|_{l_2^d}+2N\|x_0\|_{l_2^d}\leq
2N\left\|x\right\|_{l_1^{N+1}-l_2^d}.
\end{multline*}
Choosing $a_1(t)=\frac{\nu}{2}\left\|u\right\|_{l_2^d}^2$,
$a_2(t)= 0$ and $C_1=2$, $C_2=2N$ we get desired inequalities. Consequently, all assumptions of Theorem~\ref{thm:Kam3} are satisfied and we obtain the following theorem.

\begin{theorem}\label{NOC}
For $\alpha\in \left(\frac{1}{2},1\right)$, if
$$(x_*,u_*)\in \left(\Ipl\left(L^2([0,T];(\R^d)^{N+1})\right)+\left\{\displaystyle\frac{c}{t^{1-\al}};c\in(\R^d)^{N+1}\right\}\right)\times \mathcal{U}_M$$
is a locally optimal solution to problem \eqref{eq:HKS1}--\eqref{eq:HKCF1}, then there exists
a function\\
$\lambda\in\Ipr\left(L^2([0,T];(\R^d)^{N+1})\right)$ such that
\begin{equation*}
\Dpr[\lambda](t)=A^T\lambda(t)+\frac{\partial}{\partial x}f(t,x_*(t),u_*(t))
\end{equation*}
for a.e. $t\in[0,T]$ and
$$I^{1-\alpha}_{T-}[\lambda](T)=0,$$
where $\frac{\partial f}{\partial x}$ is given by \eqref{eq:PochF}. Furthermore,
\begin{multline}\label{eq:maxcon}
f(t,x_*(t),u_*(t))
+\lambda(t)g(t,x_*(t),u_*(t))\\
=\min\limits_{u\in M}\left\{f(t,x_*(t),u)
+\lambda(t)g(t,x_*(t),u)\right\}
\end{multline}
for a.e. $t\in [0,T]$, where $f$ and $g$ are given by \eqref{eq:FunctF} and \eqref{eq:FunctG}, respectively.
\end{theorem}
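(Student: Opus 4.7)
The plan is to apply the fractional Pontryagin Maximum Principle of Theorem~\ref{thm:Kam3} with the particular choice $p=2$. The hypothesis $1\le p<\frac{1}{1-\al}$ there translates, for $p=2$, into $\alpha>\frac{1}{2}$, which is exactly why the statement restricts to $\alpha\in(\tfrac{1}{2},1)$. Taking $f$ and $g$ as in \eqref{eq:FunctF}--\eqref{eq:FunctG}, the work is to verify the seven assumptions of Theorem~\ref{thm:Kam3}. Most of these are either immediate from the affinity of $g$ in $(x,u)$ and the quadratic nature of $f$ (measurability, continuity, and $C^1$-ness in $x$ are obvious; the Jacobian $\partial_x g$ is the constant matrix $A$, and $\partial_x f$ is the vector \eqref{eq:PochF}), or were already discussed before the theorem statement. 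For assumption~(1)(b), $g(t,x,u)-g(t,\tilde x,u)=A(x-\tilde x)$ gives Lipschitz constant $L=\|A\|$; for (1)(c), $\|g(t,0,u)\|=\|u\|$ lets us take $r\equiv 0$ and $\gamma=1$. The convexity of $\tilde Z$ in assumption~(7) follows from affinity of $g$ in $u$ combined with convexity of $f$ in $u$ and convexity of the control set $M$, as sketched in the text preceding the theorem.

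The main technical step is assumption~(3). Expanding each term $\|x_i-x_j\|^2\le 2\|x_i\|^2+2\|x_j\|^2$ and using $\|u\|\le K$ on $M$, one collects the sums into an estimate of the form
\[
|f(t,x,u)|\le \tfrac{\nu K^2}{2}+C\,\|x\|^2
\]
with $C$ depending only on $N$ and the chosen product norm on $(\R^d)^{N+1}$, so that $\bar a_1\equiv \tfrac{\nu K^2}{2}\in L^1$ and $\bar C_1=C$ realise the first growth bound with $p=2$. For the gradient \eqref{eq:PochF}, the coordinate-wise triangle inequality yields $\|\partial_x f\|\le 2N\|x\|$, so $\bar a_2\equiv 0\in L^{p'}$ and $\bar C_2=2N$ realise the second bound with $p-1=1$. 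The remaining conditions (2), (4), (5), (6) are immediate from the explicit formulas for $f$, $g$, and their $x$-gradients.

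With every hypothesis of Theorem~\ref{thm:Kam3} verified at $p=2$, its conclusion supplies a costate $\lambda\in\Ipr(L^2)$ satisfying the adjoint equation; because $g(t,x,u)=Ax+Bu$, the transposed Jacobian $(\partial_x g)^T$ collapses to the constant $A^T$, yielding the form stated in the theorem. The transversality condition $\Ipra[\lambda](T)=0$ and the pointwise minimum principle \eqref{eq:maxcon} are then direct quotations of what Theorem~\ref{thm:Kam3} asserts. The main obstacle I anticipate is not conceptual but bookkeeping: keeping the mixed $\ell_1^{N+1}$--$\ell_2^d$ product norm used throughout Theorem~\ref{thm:Kam3} consistent with the single Euclidean norm in which the estimates above are most natural, so that the absolute constants $\bar C_1$, $\bar C_2$ and the exponents $p$, $p-1$ match the statement of that theorem exactly.
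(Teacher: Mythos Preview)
Your proposal is correct and follows essentially the same route as the paper: both verify the hypotheses of Theorem~\ref{thm:Kam3} at $p=2$ (whence the restriction $\alpha>\tfrac12$), with the same choices $r\equiv0$, $\gamma=1$ for (1)(c), the same convexity argument for (7) via affinity of $g$ and convexity of $f$ in $u$, and the same gradient bound $\|\partial_x f\|\le 2N\|x\|$ giving $\bar a_2\equiv0$, $\bar C_2=2N$. The only cosmetic differences are that the paper carries out the estimate on $|f|$ more explicitly in the mixed $\ell_1^{N+1}$--$\ell_2^d$ norm to arrive at the sharp constant $\bar C_1=2$, and writes $\bar a_1(t)=\tfrac{\nu}{2}\|u\|^2$ rather than your cleaner $\tfrac{\nu K^2}{2}$ (which is arguably more correct, since $\bar a_1$ must be independent of $u\in M$).
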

%-----------------------------------------------------------------------------
\section{Illustrative examples}\label{examples}

In this section, two numerical examples are given to verify the effectiveness of the proposed control strategy.

\begin{example}
Let us consider the following system
\begin{equation}\label{eq:HKS:Ex1}
\begin{cases}
\Dpl [x_0](t)=u(t),\\
\Dpl [x_1](t)=x_2(t)-x_1(t)+x_0(t)-x_1(t),\\
\Dpl [x_2](t)=x_1(t)-x_2(t),\\
\Dpl [x_3](t)=x_4(t)-x_3(t)+x_0(t)-x_3(t),\\
\Dpl [x_4](t)=x_3(t)-x_4(t),\\
\Ipla [x_j](0)=x_{j0}\in\R,~j=0,1,2,3,4
\end{cases}
\end{equation}
where $u(t)\in M:=\left\{u(t)\in\R:|u(t)|\leq 1\right\}$. In this case, all agents are connected with each other through the leader $x_0$ (see Figure~\ref{example_conf1}). Without the presence of the leader, the set $\{x_1,x_2\}$ does not interact with the set $\{x_3,x_4\}$ and the dynamic is described by the system
\begin{equation}\label{without_control}
\begin{cases}
\Dpl [x_1](t)=x_2(t)-x_1(t),\\
\Dpl [x_2](t)=x_1(t)-x_2(t),\\
\Dpl [x_3](t)=x_4(t)-x_3(t),\\
\Dpl [x_4](t)=x_3(t)-x_4(t)),\\
\Ipla [x_j](0)=x_{j0}\in\R,~j=1,2,3,4.
\end{cases}
\end{equation}

\begin{figure}[h]\label{example_conf1}
  \centering
  \includegraphics[width=7cm]{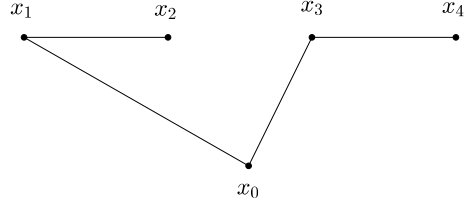}\\
  \caption{Model with leader and control.}
\end{figure}

%Note that system \eqref{eq:HKS:Ex1} can be written shortly in the matrix form
%\begin{equation*}
%\Dpl [x](t)=Ax(t)+Bu(t),
%\end{equation*}
%where
%\begin{equation*}
%A=\left[
%\begin{array}{ccccc}
%0 & 0 & 0 & 0 & 0\\
%1 & -2 & 1 & 0 & 0\\
%0 & 1 & -1 & 0 & 0 \\
%1 & 0 & 0 & -2 & 1 \\
%0 &0 & 0 & 1 & -1\\
%\end{array}\right],
%~~
%B=\left[
%\begin{array}{cccc}
%1 \\
%0 \\
%0 \\
%0\\
%0
%\end{array}\right].
%\end{equation*}

Let us consider the functional
\begin{equation}\label{eq:HK:Ex2}
\int\limits_0^T\Biggl[
\frac{1}{32}\sum\limits_{i,j=1}^4(x_i(t)-x_j(t))^2+\frac{1}{2}\sum\limits_{i=1}^4(x_0(t)-x_i(t))^2+u^2(t)\Biggr]\;dt.
\end{equation}
One can easily check that problem of minimizing \eqref{eq:HK:Ex2} subject to \eqref{eq:HKS:Ex1} satisfies assumptions of Theorem~\ref{NOC}.
Let $(x_*,u_*)$ be a solution to problem \eqref{eq:HKS:Ex1}--\eqref{eq:HK:Ex2}, then there exists a function
$\lambda\in\Ipr\left(L^2([0,T];\R^5)\right)$ such that the triple $(x_*,u_*,\lambda)$ satisfies the system
\begin{equation}\label{NOC2}
\Dpr[\lambda](t)=A^T\lambda(t)+\left[
\begin{array}{cccc}
4x_{*0}-\sum\limits_{i=1}^4x_{*i} \\
\frac{1}{2}(x_{*1}-\frac{1}{4}\sum\limits_{i=1}^4x_{*i})+(x_{*1}-x_{*0}) \\
\frac{1}{2}(x_{*2}-\frac{1}{4}\sum\limits_{i=1}^4x_{*i})+(x_{*2}-x_{*0}) \\
\frac{1}{2}(x_{*3}-\frac{1}{4}\sum\limits_{i=1}^4x_{*i})+(x_{*3}-x_{*0})\\
\frac{1}{2}(x_{*4}-\frac{1}{4}\sum\limits_{i=1}^4x_{*i})+(x_{*4}-x_{*0})
\end{array}\right]
\end{equation}
with
$$I^{1-\alpha}_{T-}[\lambda](T)=0.$$
Furthermore,
$$F(x_*(t),u_*(t),\lambda(t))=\min\limits_{u\in M}F(x_*(t),u,\lambda(t)),~\textnormal{for a.e. }t\in [0,T],$$
where
\begin{multline*}
F(v,u,\lambda)
:=\frac{1}{32}\sum\limits_{i,j=1}^4(x_i-x_j)^2+\frac{1}{2}\sum\limits_{i=1}^4(x_0-x_i)^2+u^2+\lambda_0 u\\
+\lambda_1(x_0-2x_1+x_2)+\lambda_2(x_1-x_2)+\lambda_3(x_0-2x_3+x_4)+\lambda_4(x_3-x_4).
\end{multline*}
Note that variables $x$ do not influence on the point, where the minimum of $F$ is attained, but only on its value.
Therefore, an optimal control $u_*$ must be such that
\begin{equation*}
u_*^2(t)+\lambda_0(t)u(t)
=\min\limits_{u\in M}\{u^2(t)+\lambda_0(t) u(t)\}
\end{equation*}
for a.e. $t\in [0,T]$. Hence
\begin{equation*}
u_*(t)=
\begin{cases}
1& \text{ if } \lambda_0(t)\leq -2,\\
-\frac{\lambda_0(t)}{2} & \text{ if } -2 <\lambda_0(t)< 2,\\
-1 & \text{ if } \lambda_0(t)\geq 2,
\end{cases}
\end{equation*}
where $u_*$, $\lambda$ satisfy equations \eqref{eq:HKS:Ex1}, \eqref{NOC2}, and condition
$I^{1-\alpha}_{T-}[\lambda](T)=0$ holds.

Figures below show solutions to systems with and without leader and control, for the fractional orders $\alpha=0.6$ (Figure \ref{fig_Ex1alpha0.6}) and $\alpha=0.9$ (Figure \ref{fig_Ex1alpha0.9}). We see that, with the presence of the leader and control, the system converges to a consensus faster. The effectiveness of the control strategy is verified.

\begin{figure}[h]
\centering
\begin{minipage}{.5\textwidth}
  \centering
 \hspace*{-.9cm} \includegraphics[scale=0.3]{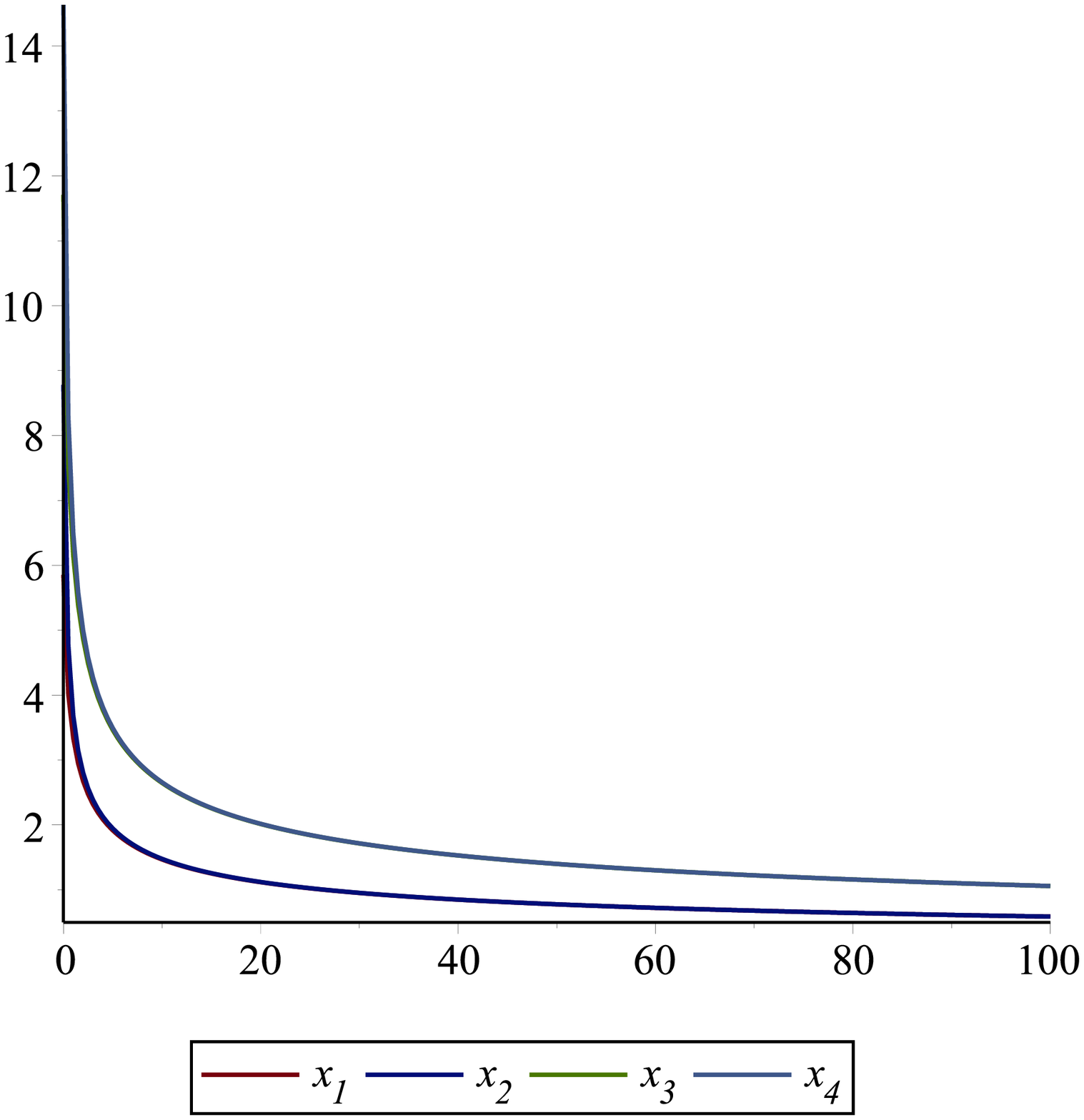}
\end{minipage}%
\begin{minipage}{.6\textwidth}
  \centering
  \hspace*{-1cm}  \includegraphics[scale=0.3]{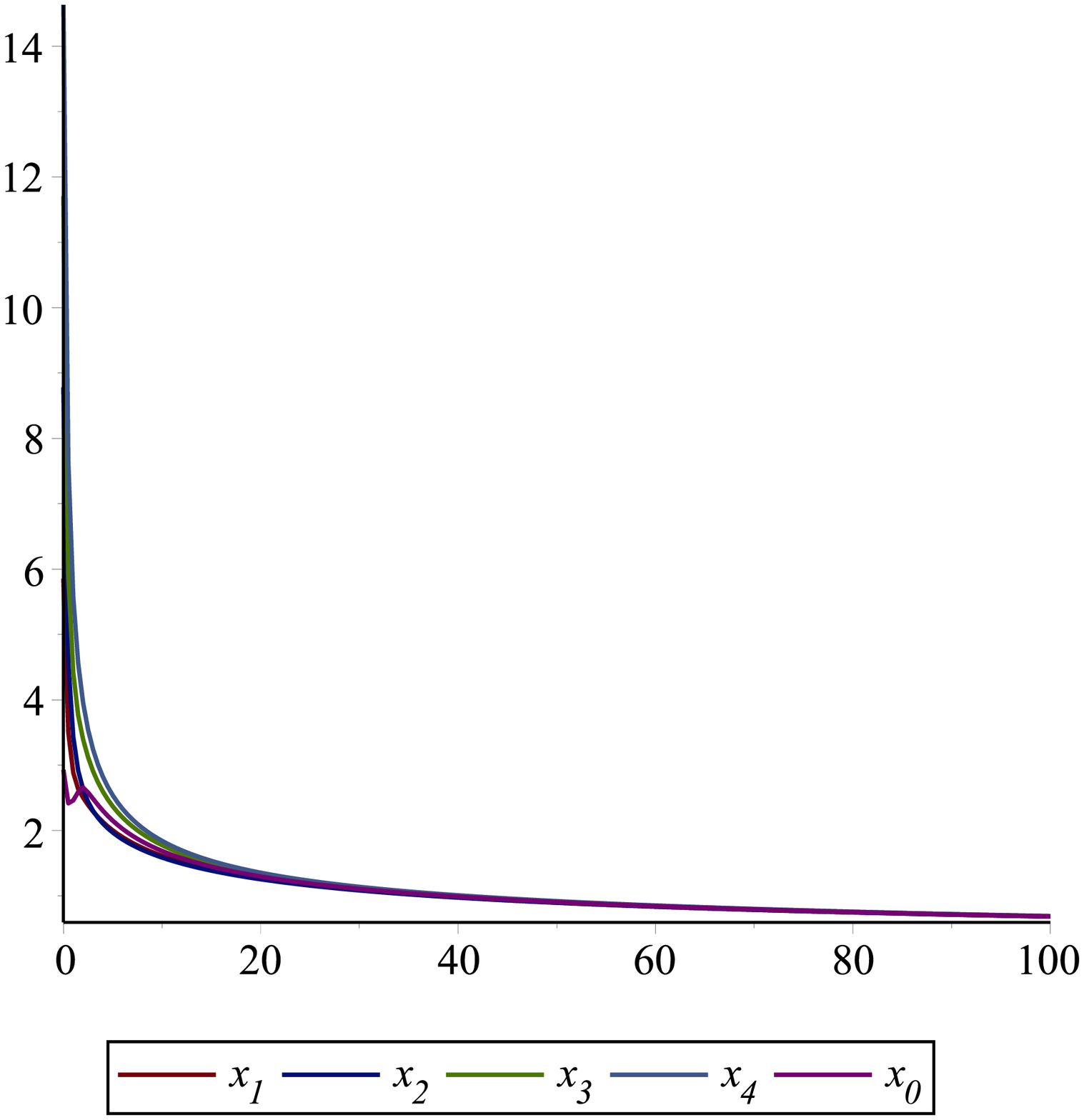}
\end{minipage}
\caption{Without  (left) and with  (right)  leader and control, for $\alpha=0.6$.}\label{fig_Ex1alpha0.6}
\end{figure}

\begin{figure}[h]
\centering
\begin{minipage}{.5\textwidth}
  \centering
 \hspace*{-.9cm} \includegraphics[scale=0.3]{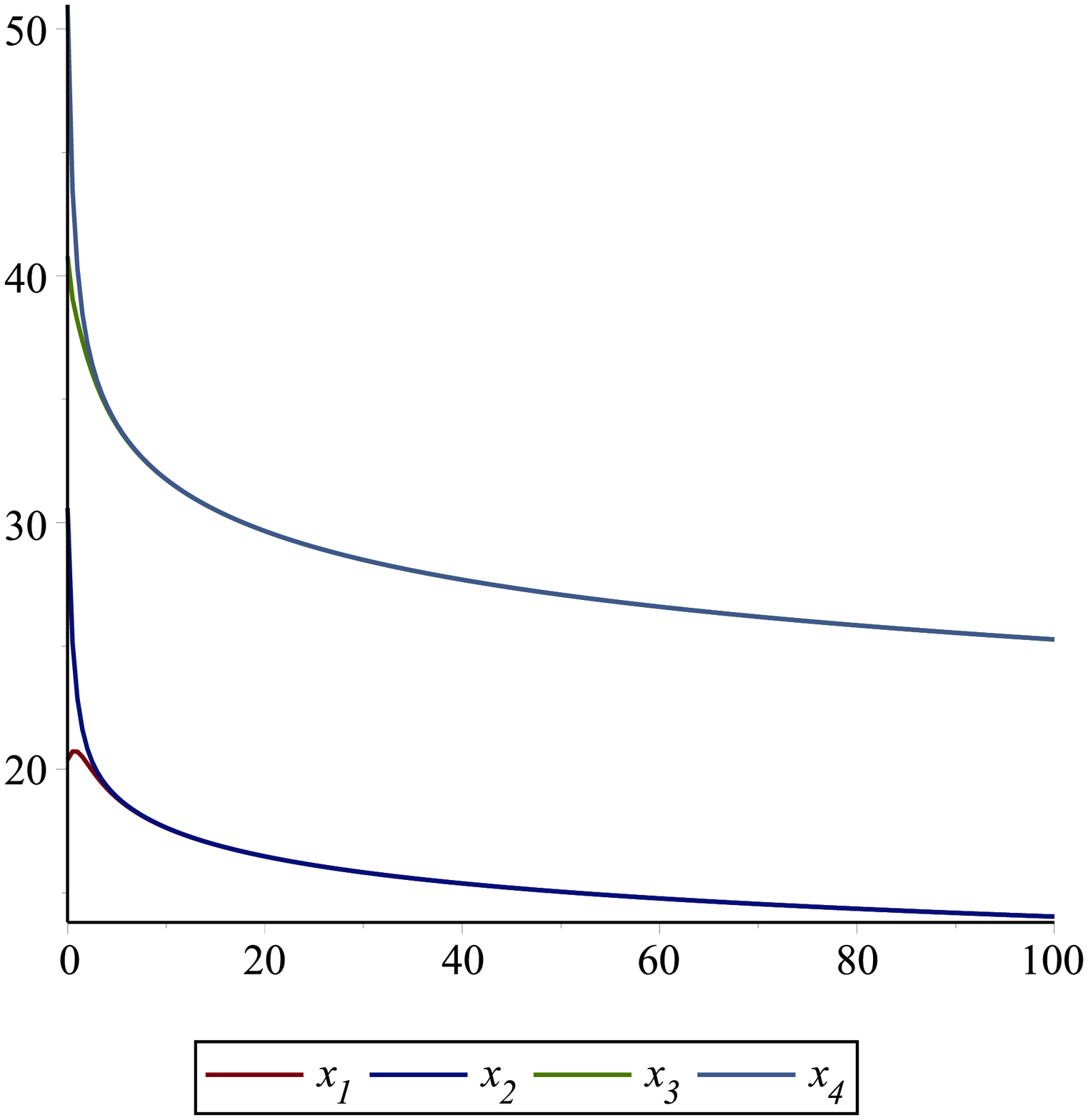}
\end{minipage}%
\begin{minipage}{.6\textwidth}
  \centering
  \hspace*{-1cm}  \includegraphics[scale=0.3]{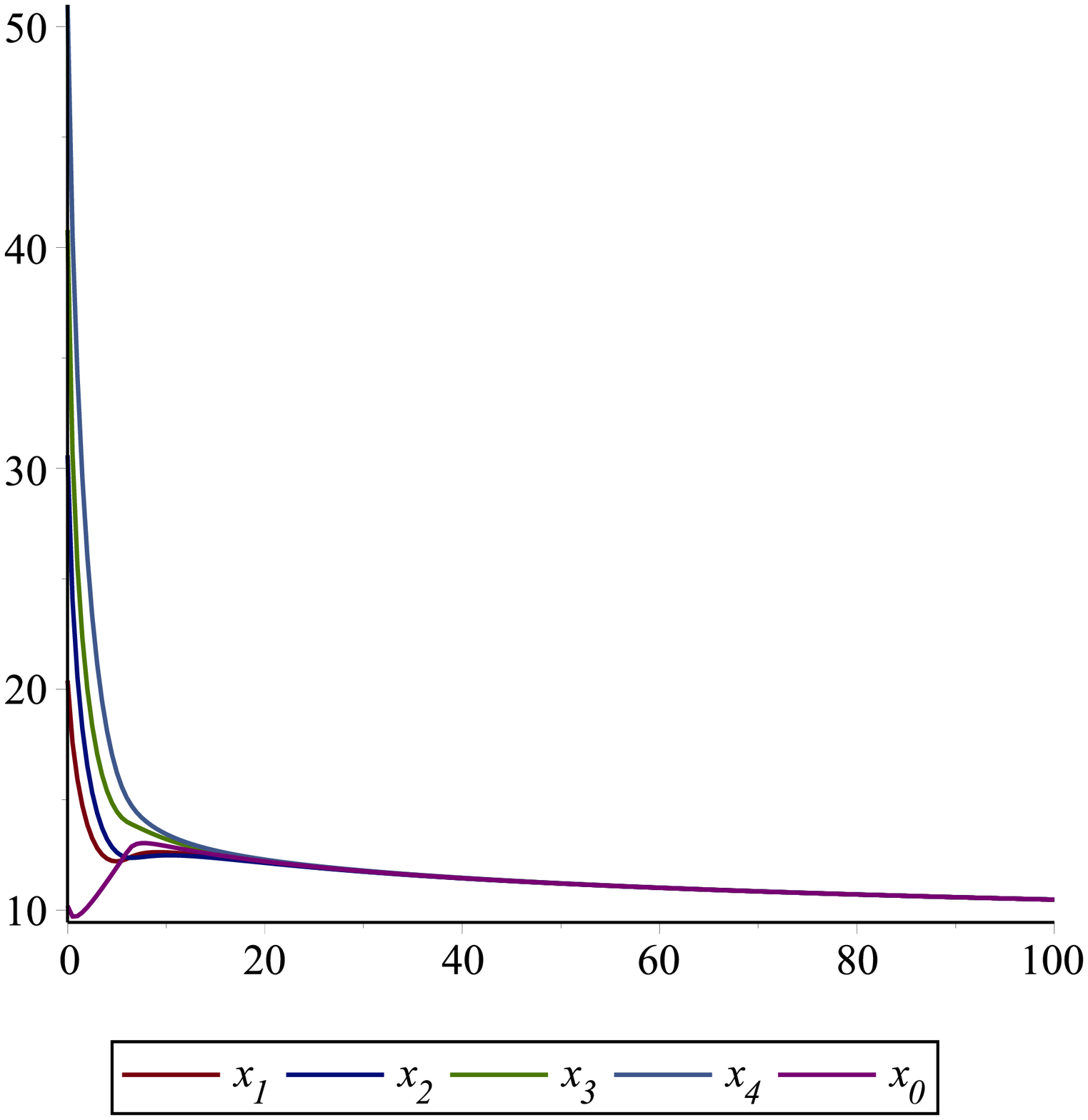}
\end{minipage}
\caption{Without  (left) and with  (right)  leader and control, for $\alpha=0.9$.}\label{fig_Ex1alpha0.9}
\end{figure}

\end{example}

% -----------------------------------------------------------------------------

\begin{example}
For our second example, consider a system given by three agents and the leader:
\begin{equation}\label{eq:HKS:Ex2}
\begin{cases}
\Dpl [x_0](t)=u(t),\\
\Dpl [x_1](t)=x_2(t)-x_1(t),\\
\Dpl [x_2](t)=x_1(t)-x_2(t),\\
\Dpl [x_3](t)=x_0(t)-x_3(t),\\
\Ipla [x_j](0)=x_{j0}\in\R,~j=0,1,2,3
\end{cases}
\end{equation}
where $u(t)\in M:=\left\{u(t)\in\R:|u(t)|\leq 10\right\}$ (Figure \ref{example_conf2}). In this model, agent $x_3$ does not interact with agents $x_1$ and $x_2$, independently of the leader presence.
\begin{figure}[h]
  \centering
  \includegraphics[width=7cm]{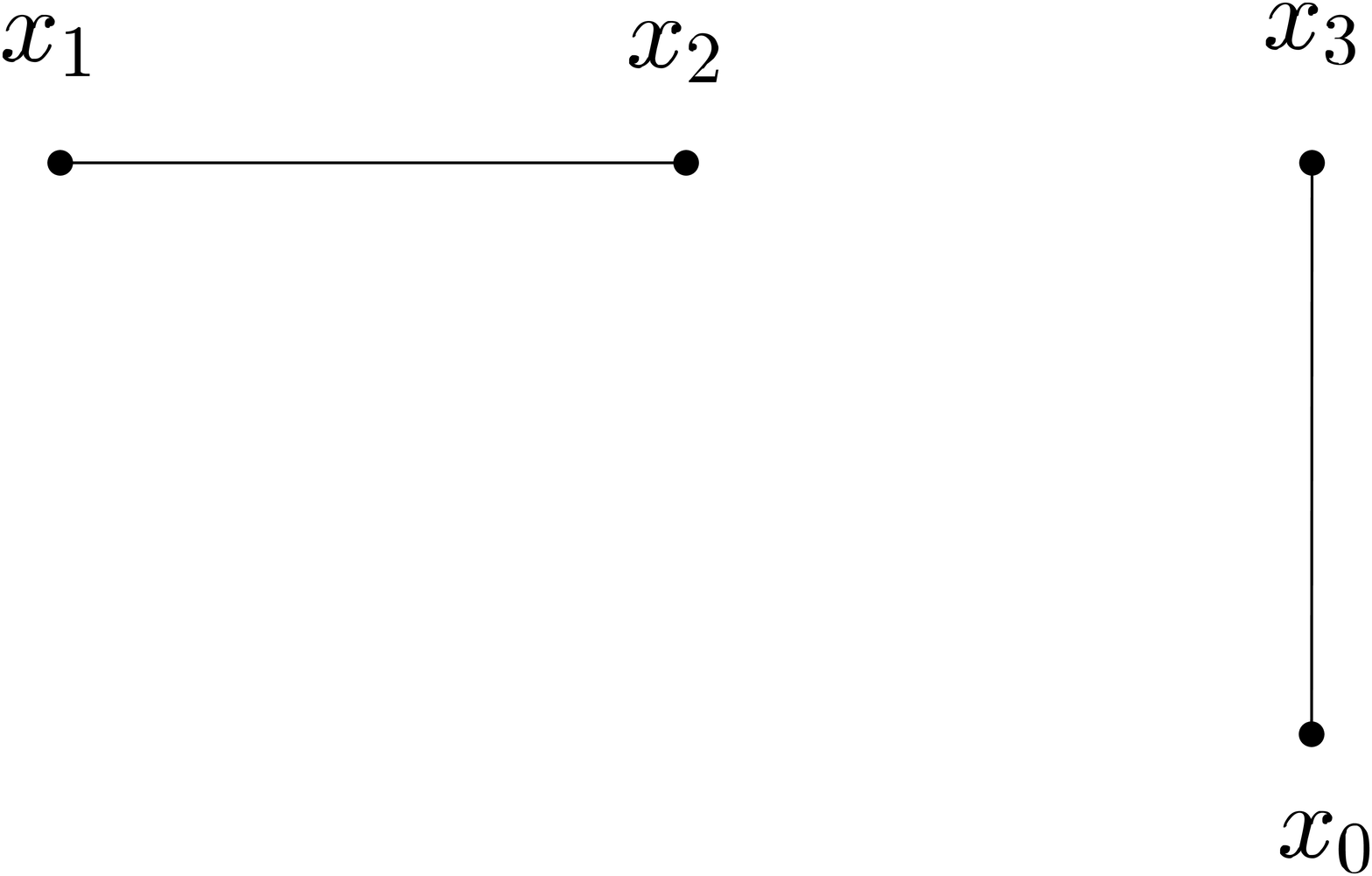}\\
  \caption{Model with leader and control.}\label{example_conf2}
\end{figure}

%For this case, the matrices associated to system \eqref{eq:HKS:Ex2} are
%\begin{equation*}
%\Dpl [x](t)=Ax(t)+Bu(t),
%\end{equation*}
%where
%\begin{equation*}
%A=\left[
%\begin{array}{ccccc}
%0 & 0 & 0 & 0 \\
%0 & -1 & 1  & 0\\
%0 & 1 & -1 & 0 \\
%1 & 0 & 0 & -1
%\end{array}\right],
%~~
%B=\left[
%\begin{array}{cccc}
%1 \\
%0 \\
%0 \\
%0
%\end{array}\right].
%\end{equation*}
The objective is to minimize the functional
$$\int\limits_0^T\Biggl[\frac{1}{18}\sum\limits_{i,j=1}^3(x_i(t)-x_j(t))^2+\frac{1}{2}\sum\limits_{i=1}^3(x_0(t)-x_i(t))^2+u^2(t)\Biggr]\;dt,$$
subject to system \eqref{eq:HKS:Ex2}.
%If we disregard the leader and the control, the dynamic is described by the system
%\begin{equation}\label{without_control}
%\begin{cases}
%\Dpl [x_1](t)=x_2(t)-x_1(t),\\
%\Dpl [x_2](t)=x_1(t)-x_2(t),\\
%\Dpl [x_3](t)=0,\\
%\Ipla [x_j](0)=x_{j0}\in\R,~j=1,2,3.
%\end{cases}
%\end{equation}
Considering the augmented function
\begin{multline*}
F(v,u,\lambda)
:=\frac{1}{18}\sum\limits_{i,j=1}^3(x_i-x_j)^2+\frac{1}{2}\sum\limits_{i=1}^3(x_0-x_i)^2+u^2+\lambda_0 u\\
+\lambda_1(-x_1+x_2)+\lambda_2(x_1-x_2)+\lambda_3(x_0-x_3),
\end{multline*}
we deduce that the optimal control $u_*$ is given by the formula
\begin{equation*}
u_*(t)=
\begin{cases}
1 & \text{ if } \lambda_0(t)\leq -20,\\
-\frac{\lambda_0(t)}{2} & \text{ if } -20 <\lambda_0(t)< 20,\\
-1 & \text{ if } \lambda_0(t)\geq 20.
\end{cases}
\end{equation*}
In figures below we present plots of agents' trajectories, with respect to the orders $\alpha=0.6$ (Figure \ref{fig_Ex2alpha0.6}) and $\alpha=0.9$ (Figure \ref{fig_Ex2alpha0.9}). Again, because of the presence of the leader and control a consensus is reached faster. This demonstrates the effectiveness of the applied control strategy.
\begin{figure}[h]
\centering
\begin{minipage}{.5\textwidth}
  \centering
 \hspace*{-.9cm} \includegraphics[scale=0.3]{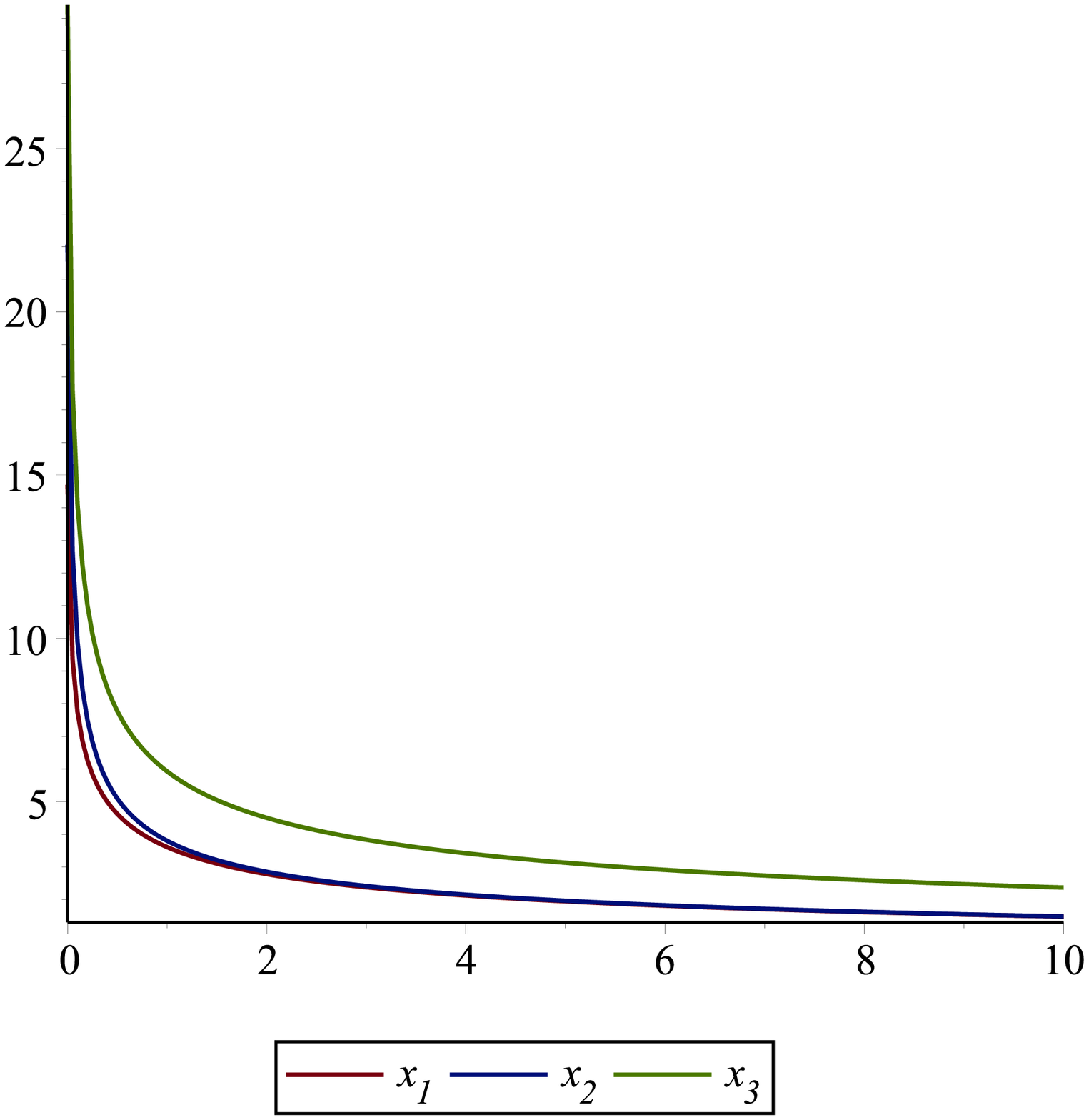}
\end{minipage}%
\begin{minipage}{.6\textwidth}
  \centering
  \hspace*{-1cm}  \includegraphics[scale=0.3]{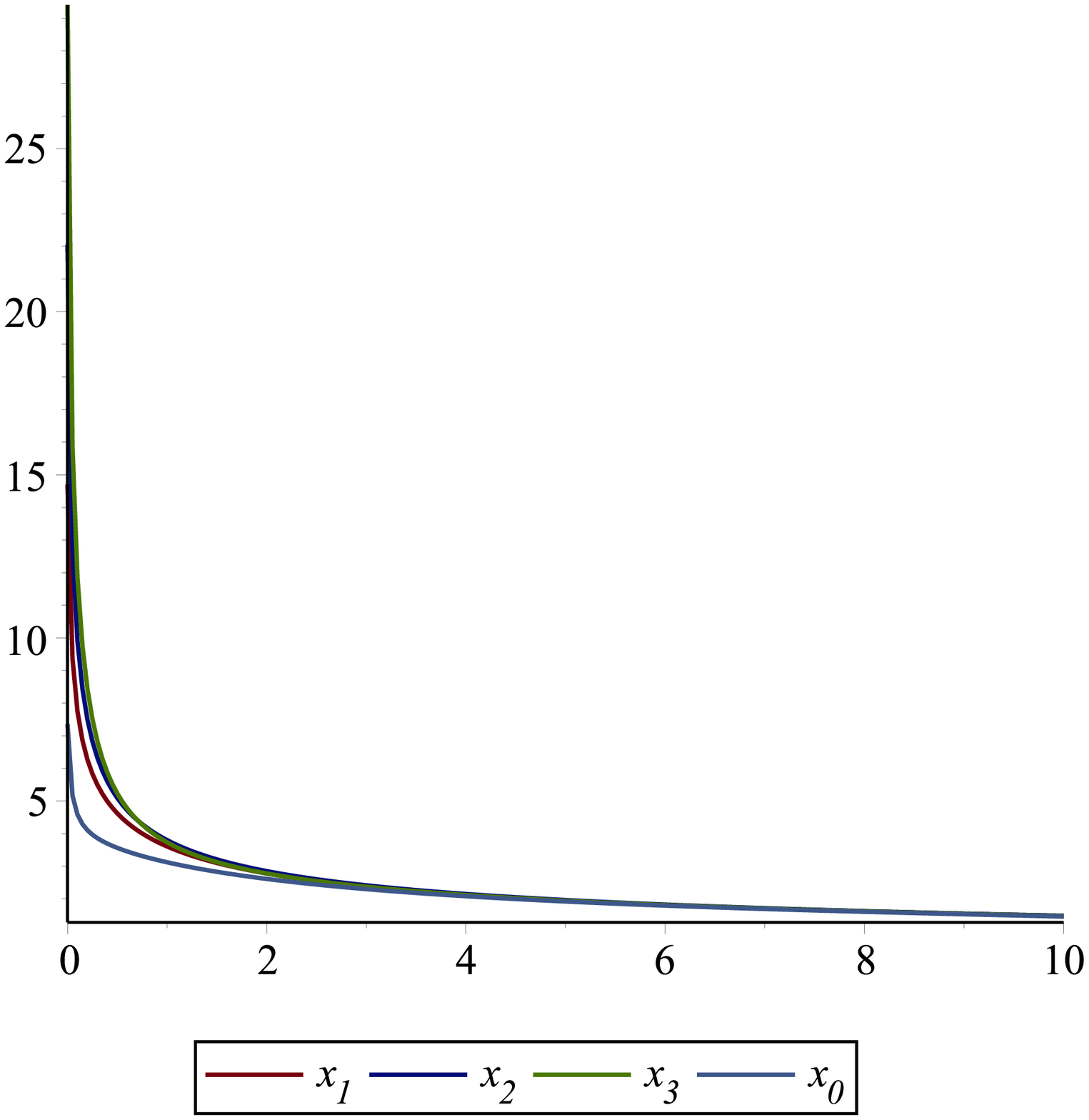}
\end{minipage}
\caption{Without  (left) and with  (right)  leader and control, for $\alpha=0.6$.}\label{fig_Ex2alpha0.6}
\end{figure}

\begin{figure}[h]
\centering
\begin{minipage}{.5\textwidth}
  \centering
 \hspace*{-.9cm} \includegraphics[scale=0.3]{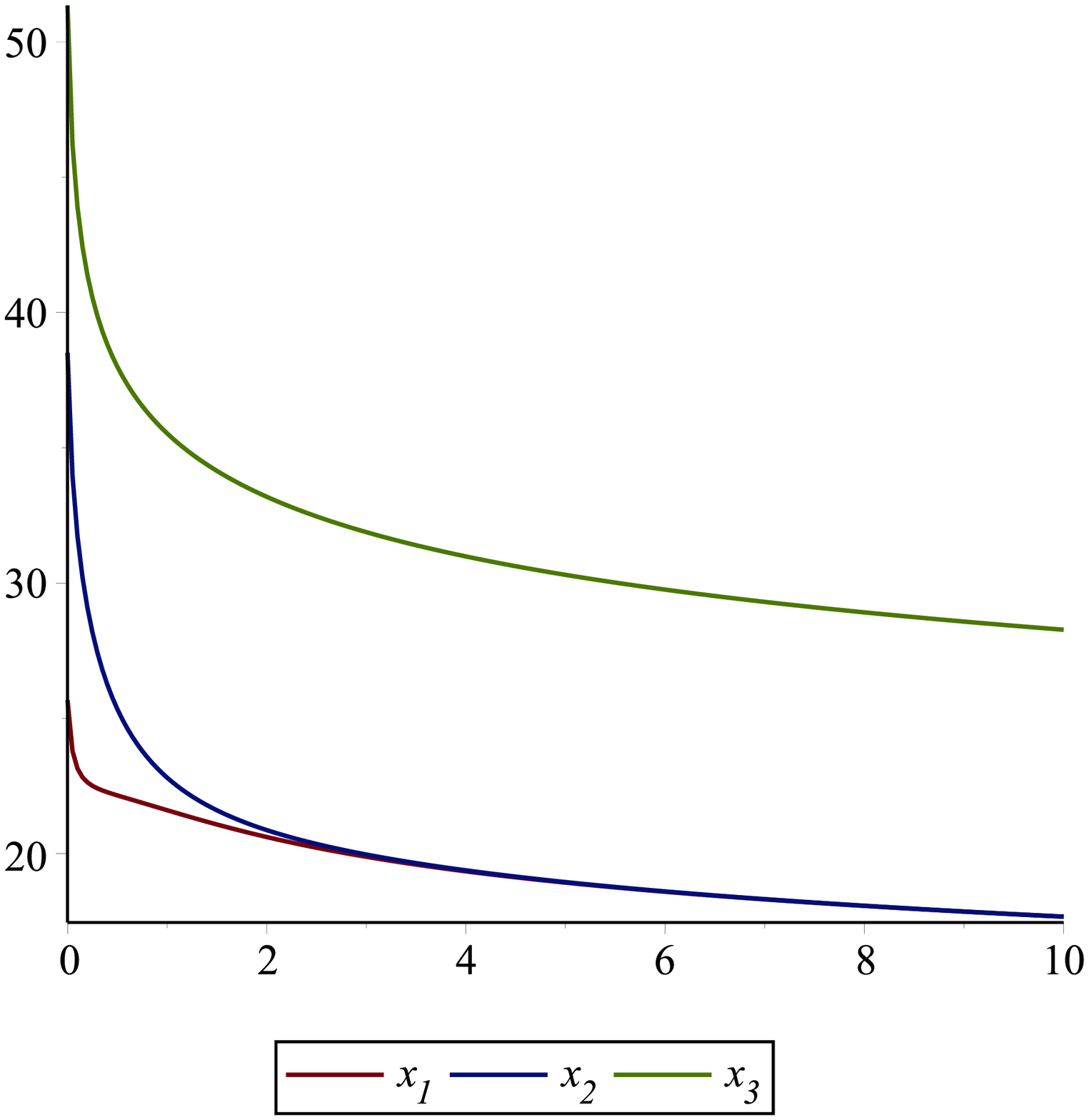}
\end{minipage}%
\begin{minipage}{.6\textwidth}
  \centering
  \hspace*{-1cm}  \includegraphics[scale=0.3]{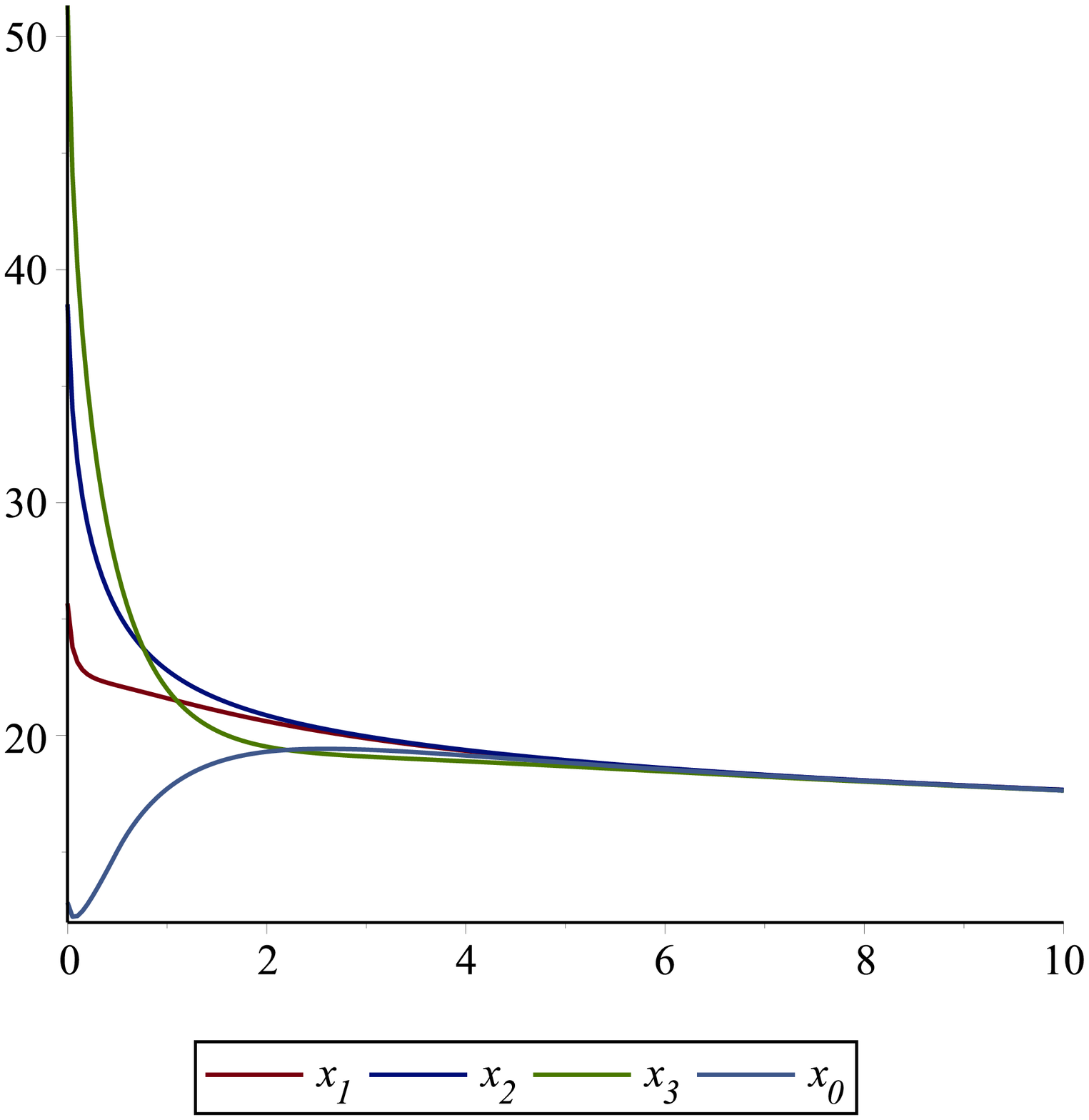}
\end{minipage}
\caption{Without  (left) and with  (right)  leader and control, for $\alpha=0.9$.}\label{fig_Ex2alpha0.9}
\end{figure}

\end{example}

% -----------------------------------------------------------------------------

\section*{Acknowledgements}
R. Almeida was supported by Portuguese funds through the CIDMA - Center for Research and Development in Mathematics and Applications, and the Portuguese Foundation for Science and Technology (FCT-Funda\c{c}\~ao para a Ci\^encia e a Tecnologia), within project UID/MAT/04106/2013; A. B. Malinowska and T. Odzijewicz were supported by Polish founds of National Science Center, granted on the basis of decision DEC-2014/15/B/ST7/05270.

\end{document}